\DeclareBoldMathCommand\boldlangle{\left\langle}
\DeclareBoldMathCommand\boldrangle{\right\rangle}
\newtheorem{theorem}{Theorem}
\newtheorem{definition}[theorem]{Definition}
\newtheorem{proposition}[theorem]{Proposition}
\newenvironment{proof}[1][Proof]{\noindent\textbf{#1.} }{\ \rule{0.5em}{0.5em}}
\author{Hatice G\"{u}ls\"{u}n Akay \ \  Ummahan Ege Arslan\
}
\title{\textbf{On the Barr exactness property of $\mathsf{BXMod/R}$}}
\date{}
\begin{document}
\maketitle

{\noindent\textbf{Abstract}} \ \ In this work, it is shown that the category $\mathsf{BXMod/R}$ of
braided crossed modules over a fixed commutative algebra $R$ is an exact category in the sense of Barr. \\
{\noindent\textbf{MR(2010) Subject Classification}} \ \  18A30, 18E10. \\
{\noindent\textbf{Keywords}} \ \ Braided crossed module, exact category, regular category, coequaliser.\ \

\section{Introduction}

Exact categories for additive categories were introduced by Quillen \cite%
{Quillen}. Barr defined exact categories for non-additive categories \cite{Barr}. He introduced exact categories in order to define a good
notion of non-abelian cohomology.

We will mention regular categories closely related by exact categories. There exist in the literature many different definitions of regular categories, which are all equivalent under the assumptions that finite limits and coequalisers exist. We will recall the following definition from \cite{Tholen}, but a weaker version of it is given in \cite{Barr}.

A category $C$ is called regular if it satisfies the following three properties:

\begin{description}
\item[i)] $C$ is finitely complete,

\item[ii)] If $f:X\longrightarrow Y$ is a morphism in $C$, and
$$ \xymatrix@R=30pt@C=30pt{
Z \ar[d]_-{p_1} \ar[r]^-{p_0}
                &X \ar[d]^-{f}  \\
  X  \ar[r]_-{f}
                & Y }
$$
\noindent is a pullback $($%
then $Z\underset{p_{1}}{\overset{p_{0}}{\rightrightarrows }}X$ is called the kernel pair of $f)$, the coequaliser of $%
p_{0},p_{1}$ exists;

\item[iii)] If $f:X\longrightarrow Y$ is a morphism in $C$, and
$$ \xymatrix@R=30pt@C=30pt{
W \ar[d]_-{g} \ar[r]^-{}
                &X \ar[d]^-{f}  \\
 Z \ar[r]_-{}
                & Y }
$$
\noindent is a pullback, and if $f$ is a regular epimorphism, then $g$ is a regular epimorphism as well.

\end{description}

If a regular category additionally has the property that every equivalence relation is effective that is every equivalence
relation is a kernel pair, then it is called a Barr exact category.
\smallskip

Examples of exact categories are:

$\left( 1\right) $ The category $Set$ of sets.

$\left( 2\right) $ The category of non empty sets.

$\left( 3\right) $ Any abelian category.

$\left( 4\right) $ Every partially ordered set considered as a category.

$\left( 5\right) $ For any small category $C$, the functor category $\left(
\mathcal{C}^{op},Set\right) $.

Brown and Gilbert introduced in \cite{Brown1} the notion of braided regular
crossed module of groupoids and groups as an algebraic model for homotopy
3-types equivalent to Conduch\'{e}'s $2$-crossed module. The reduced case of
braided regular crossed module is called a braided crossed module of groups.
Braided crossed modules in the category of commutative algebras were defined by
Ulualan in \cite{Ulualan}.

The purpose of this paper is to answer the question whether the category of
braided crossed modules of commutative algebras is exact. We prove that the category of braided crossed modules of commutative algebras is an exact category.

\begin{quote}
\textbf{Conventions}
\end{quote}

Throughout this paper $k$ will be a fixed commutative ring with $0\neq 1$.
All $k$-algebras will be commutative and associative.

\section{Braided crossed modules}

In order to give the notion of braided crossed modules of commutative algebras, we will
recall the concept of crossed modules of commutative algebras. Crossed modules of groups
originate in algebraic topology and more particularly in homotopy theory. Mac Lane and Whitehead showed in \cite{Mac} that crossed modules of groups modelled homotopy 2-types (3-types in their notation). The commutative algebra case of crossed modules is contained in
the paper of Lichtenbaum and Schlessinger \cite{LS} and also in the work of
Gerstenhaber \cite{Gers} under different names. Some categorical results and
Koszul complex link are also given by Porter \cite{Porter}.

\begin{definition}\cite{Porter}
A crossed module of commutative algebras, $(C,R,\partial)$, is an $R$-algebra, $C$, together with an $R$-algebra morphism $\partial :C\longrightarrow
R $ such that for all $c,c^{\prime }\in C$
\begin{equation*}
 \text{ }\partial \left(
c\right) \cdot c^{\prime }=cc^{\prime }
\end{equation*}%
where $R$ is a $k$-algebra.
A morphism of crossed modules from $\left( C,R,\partial \right) $ to $\left( C^{\prime },R^{\prime
},\partial ^{\prime }\right) $ is a pair of $k$-algebra morphisms, $\phi :C\longrightarrow C^{\prime }$
and $\psi :R\longrightarrow R^{\prime }$ such that
\begin{equation*}
\left( i\right) \text{ }\partial ^{\prime }\phi =\psi \partial \text{ \
\ \ and \ \ \ }\left( ii\right) \text{ }\phi \left( r\cdot c\right)
=\psi \left( r\right) \cdot \phi \left( c\right)
\end{equation*}%
for all $r\in R$ and $c\in C$. We thus get the category $\mathsf{XMod}$ of
crossed modules.

There is, for a fixed algebra $R$, a subcategory $\mathsf{XMod/R}$ of the category of crossed modules, which has as objects those crossed modules with $R$ as the
\textquotedblleft base\textquotedblright , i.e., all $\left( C,R,\partial
\right) $ for this fixed $R$, and having as morphisms from $\left(
C,R,\partial _{1}\right) $ to $\left( C^{\prime },R,\partial _{2}\right) $
just those $\left( f_{1},f_{0}\right) $ in $\mathsf{XMod}$ in which $%
f_{0}:R\longrightarrow R$ is the identity homomorphism on $R$.
\end{definition}

\begin{definition}\cite{Ulualan}
A braided crossed module of commutative algebras $\partial :C\rightarrow R$ is a
crossed module with the braiding function $\left\{ -,-\right\} :R\times
R\longrightarrow C$ satisfying the following axioms:

$%
\begin{array}{ll}
BCM1) & \partial \left\{ r,r^{\prime }\right\} =rr^{\prime },%
\end{array}%
$

$%
\begin{array}{ll}
BCM2) & \left\{ \partial c,\partial c^{\prime }\right\} =cc^{\prime },%
\end{array}%
$

$%
\begin{array}{lllll}
BCM3) & \left\{ \partial c,r\right\} =r\cdot c & \text{and} & \left\{
r,\partial c\right\} =r\cdot c,&
\end{array}%
$

$%
\begin{array}{ll}
BCM4) & \left\{ rr^{\prime },r^{\prime \prime }\right\} -\left\{ r,r^{\prime
}r^{\prime \prime }\right\} =0,%
\end{array}%
$

{\noindent}for all $r,r^{\prime },r^{\prime \prime }\in R$ and $c,c^{\prime
}\in C$.

We denote such a braided crossed module of commutative algebras by $\left\{
C,R,\partial \right\} $.

If $\left\{ C,R,\partial _{1}\right\} $ and $\left\{ C^{\prime },R^{\prime
},\partial _{2}\right\} $ are braided crossed modules, a morphism$\ $%
\begin{equation*}
\left( f_{1},f_{0}\right) :\left\{ C,R,\partial _{1}\right\} \longrightarrow
\left\{ C^{\prime },R^{\prime },\partial _{2}\right\} ,
\end{equation*}%
of braided crossed modules is given by a morphism of crossed modules such
that
\begin{equation*}
\left\{ -,-\right\} \left( f_{0}\times f_{0}\right) =f_{1}\left\{
-,-\right\} .
\end{equation*}

We thus get the category $\mathsf{BXMod}$ of braided crossed modules of commutative algebras.

In the case of a morphism $\left( f_{1},f_{0}\right) $ between braided
crossed modules with the same base $R$, i.e. where $f_{0}$ is the identity on $R
$ with $f_{1}\left\{ -,-\right\} =\left\{ -,-\right\} $, then we say that $%
f_{1}$ is a morphism of braided crossed $R$-modules, $\partial _{1}:C\longrightarrow R$ is a braided crossed $R$-module and we use $\left\{C,\partial _{1} \right\} $ instead of $\left\{C,R,\partial _{1} \right\} $. This gives a
subcategory $\mathsf{BXMod/R}$ of $\mathsf{BXMod}$. Our results are obtained for this subcategory.
\end{definition}

Several well known examples of crossed modules give rise to braided crossed modules as follows.

\textbf{Examples of braided crossed modules}

\noindent $\left( 1\right) $ Any identity map of $k$-algebras $\partial
:X\longrightarrow X$ is a braided crossed module with $\left\{ x,y\right\}
=xy$.

\noindent $\left( 2\right) $ If $C$ is a $k$-algebra and $C^{2}$ is an ideal generated by $\left\{ c_{1}c_{2}\mid c_{1},c_{2}\in C\right\} $. Then $\partial :C^{2}\longrightarrow C$ is a braided
crossed module with $\left\{ c_{1},c_{2}\right\} =c_{1}c_{2}$, for $%
c_{1},c_{2}\in C$.

\noindent $\left( 3\right) $ Any $R$-module $M$ can be considered as an $R$%
-algebra with zero multiplication and hence the zero morphism $%
0:M\longrightarrow R$ is a braided crossed module with $\left\{ r,r^{\prime
}\right\} =0$.

\noindent $\left( 4\right) $ Let $\left\{ C,R,\partial _{1}\right\} $ and $%
\left\{ C^{\prime },R^{\prime },\partial _{2}\right\} $ be two braided
crossed modules, then $\left\{ C\times C^{\prime },R\times R^{\prime
},\partial \right\} $ is a braided crossed module.

\section{The Barr exactness property of braided crossed modules}

Our aim now is to obtain that $\mathsf{BXMod/R}$ is an exact category. For
this purpose, we have to prove some statements in this section.
\begin{proposition} \label{prop3}
In $\mathsf{BXMod/R}$ every pair of morphisms with common domain and
codomain has an equaliser.
\end{proposition}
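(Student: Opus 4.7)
The plan is to construct the equaliser as a sub-braided-crossed-module of the domain, exactly as one does for equalisers in most algebraic categories.

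Fix two parallel morphisms $f,g\colon\{C,\partial_1\}\rightrightarrows\{C',\partial_2\}$ in $\mathsf{BXMod/R}$. First I would form the usual equaliser in the category of $R$-algebras, namely the subalgebra
\[
E=\{c\in C\mid f(c)=g(c)\}\subseteq C,
\]
equipped with the $R$-action inherited from $C$; closure under this action is immediate since $f$ and $g$ are $R$-algebra morphisms. Writing $\partial\colon E\to R$ for the restriction $\partial_1|_E$, the crossed module axiom $\partial(e)\cdot e'=ee'$ for $e,e'\in E$ is inherited at once from $(C,R,\partial_1)$, so $(E,R,\partial)$ is a crossed $R$-module.

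Next I would install the braiding. The candidate is the restriction of $\{-,-\}_C\colon R\times R\to C$ to take values in $E$. The key point is that this is well defined: since $f$ and $g$ both lie in $\mathsf{BXMod/R}$ (so their ``base'' components are $\mathrm{id}_R$) and preserve braidings, for every $r,r'\in R$ we have
\[
f\bigl(\{r,r'\}_C\bigr)=\{r,r'\}_{C'}=g\bigl(\{r,r'\}_C\bigr),
\]
which places $\{r,r'\}_C$ in $E$. Thus we obtain $\{-,-\}_E\colon R\times R\to E$, and axioms BCM1--BCM4 for $(E,\partial,\{-,-\}_E)$ follow immediately from the corresponding axioms for $(C,\partial_1,\{-,-\}_C)$ because they are equalities computed inside $C$, and $E$ is a subalgebra containing the relevant elements.

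Finally I would check that the inclusion $\iota\colon\{E,\partial\}\hookrightarrow\{C,\partial_1\}$ is a morphism in $\mathsf{BXMod/R}$ (obvious, since it is the inclusion of a sub-algebra respecting $\partial$ and the braiding) satisfying $f\iota=g\iota$, and that it is universal: given any $h\colon\{D,\delta\}\to\{C,\partial_1\}$ in $\mathsf{BXMod/R}$ with $fh=gh$, the underlying algebra map $h$ factors through $E$ by the equaliser property in $R$-algebras, and the induced map $D\to E$ automatically commutes with $\delta$, the $R$-action, and the braiding, because these structures on $E$ are the restrictions of those on $C$. Uniqueness is forced by the injectivity of $\iota$.

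There is no real obstacle here; the only step that requires more than a bookkeeping remark is verifying that the braiding of $C$ actually takes values in $E$, and this is precisely where the hypothesis that $f$ and $g$ are morphisms \emph{over the same base $R$} is used.
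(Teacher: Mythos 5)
Your construction is exactly the one the paper uses: the equaliser is the subalgebra $E=\{c\in C\mid f(c)=g(c)\}$ with the restricted crossed-module structure and braiding, the inclusion being universal. You in fact spell out the one point the paper leaves as ``easily checked'' --- that the braiding of $C$ lands in $E$ because $f$ and $g$ are morphisms over the same base $R$ --- so the proposal is correct and follows the same route.
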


\begin{proof}
Let $f,g:\left\{ C,\partial \right\} \longrightarrow \left\{ D,\delta
\right\} $ be two morphisms of braided crossed $R$-modules. Let $E$ denotes the
set $E=\left\{ c\in C\mid f\left( c\right) =g\left( c\right) \right\} .$ It can be easily checked that $\left\{ E,\varepsilon \right\} $ has the structure of
a braided crossed $R$-module, and the inclusion $u:\left\{ E,\varepsilon
\right\} \longrightarrow \left\{ C,\partial \right\} $ is a morphism of
braided crossed $R$-modules and clearly $fu=gu$.

Suppose that there exist a braided crossed $R$-module, $\left\{ E^{\prime
},\varepsilon ^{\prime }\right\} $ and a morphism $u^{\prime }:\left\{
E^{\prime },\varepsilon ^{\prime }\right\} \longrightarrow \left\{
C,\partial \right\} $ of braided crossed $R$-modules such that $fu^{\prime
}=gu^{\prime }.$ Then for all $x\in E^{\prime },$ $f\left( u^{\prime }\left(
x\right) \right) =g\left( u^{\prime }\left( x\right) \right) ,$ and hence $%
u^{\prime }\left( x\right) \in E.$ Thus, we have $\alpha :\left\{ E^{\prime
},\varepsilon ^{\prime }\right\} \longrightarrow \left\{ E,\varepsilon
\right\} $ by $\alpha \left( x\right) =u^{\prime }\left( x\right) $ from which we get
\begin{equation*}
\varepsilon \alpha \left( x\right) =\varepsilon u^{\prime }\left( x\right)
=\partial u^{\prime }\left( x\right) =\varepsilon ^{\prime }\left( x\right),
\end{equation*}
\begin{equation*}
\alpha \left( r\cdot x\right) =u^{\prime }\left( r\cdot x\right) =r\cdot
u^{\prime }\left( x\right)=r\cdot \alpha \left( x\right),
\end{equation*}
for all $x \in E^{\prime }, r \in R$. Since $u$, $u^{\prime }$ are braided crossed $R$-module morphisms, it is
clear that
$\alpha \left\{ r,r^{\prime }\right\} =\left\{ r,r^{\prime
}\right\} $ for all $r, r^{\prime } \in R$.

Let $\alpha ^{\prime }:E^{\prime }\longrightarrow E$ be a morphism of
braided crossed $R$-module such that $u\alpha ^{\prime }=u^{\prime }.$ Since
$\alpha \left( x\right) =u^{\prime }\left( x\right)
=u\alpha ^{\prime }\left( x\right) =\alpha ^{\prime }\left( x\right)$ for all $x\in E^{\prime }$,
we have that $\alpha$ is the unique morphism which makes the diagram
\begin{equation*}
 \xymatrix@R=40pt@C=40pt{
\left\{ E,\varepsilon\right\} \ar@{^{(}->}[r]^-{u}& \left\{ C,\partial \right\} \ar@{->}@<2pt>[r]^-{f}
\ar@{->}@<-2pt> [r]_-{g} &  \left\{ D,\delta\right\} \\ \left\{ E^{\prime},\varepsilon ^{\prime }\right\} \ar@{.>}[u]_{\alpha} \ar[ur]_{u^{\prime }} }
\end{equation*}

\noindent commutative. Hence $u$ is the equaliser of $\left( f,g\right),$ as required.
\end{proof}

\begin{proposition} \label{prop4}
$\mathsf{BXMod/R}$ has finite products.
\end{proposition}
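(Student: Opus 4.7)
The plan is to exhibit a terminal object and a binary product, since these together yield all finite products by iteration.

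For the terminal object, I would take $\{R,\mathrm{id}_R\}$ with braiding $\{r,r'\}=rr'$, which is a braided crossed $R$-module by Example (1). Given any $\{C,\partial\}$ in $\mathsf{BXMod/R}$, a morphism $f:\{C,\partial\}\to\{R,\mathrm{id}_R\}$ must satisfy $\mathrm{id}_R\circ f=\partial$, forcing $f=\partial$. It then remains only to check that $\partial$ itself is a morphism of braided crossed $R$-modules: $R$-equivariance is automatic since $\partial$ is an $R$-algebra map, and the braiding condition $\partial\{r,r'\}_C=rr'=\{r,r'\}_R$ is exactly axiom $BCM1$.

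For binary products of $\{C_1,\partial_1\}$ and $\{C_2,\partial_2\}$, I would take the pullback
\begin{equation*}
C_1\times_R C_2=\{(c_1,c_2)\in C_1\times C_2\mid \partial_1(c_1)=\partial_2(c_2)\},
\end{equation*}
equipped with componentwise multiplication, componentwise $R$-action, the map $\partial(c_1,c_2)=\partial_1(c_1)=\partial_2(c_2)$, and the braiding $\{r,r'\}=(\{r,r'\}_1,\{r,r'\}_2)$. I would first verify that $C_1\times_R C_2$ is closed under these operations (which uses only that $\partial_1,\partial_2$ are algebra maps) and that the braiding lands in it (which follows from $BCM1$ applied to each factor). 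The crossed module identity and axioms $BCM1$--$BCM4$ then all hold componentwise. The projections $\pi_i:C_1\times_R C_2\to C_i$ are morphisms in $\mathsf{BXMod/R}$ by construction.

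For the universal property, given $h_i:\{D,\delta\}\to\{C_i,\partial_i\}$ with $\partial_i h_i=\delta$, the pairing $h=(h_1,h_2):D\to C_1\times C_2$ lands in $C_1\times_R C_2$ because $\partial_1h_1=\delta=\partial_2h_2$; uniqueness is forced by the projections, and the verification that $h$ respects the $R$-action and the braiding reduces to the corresponding property of each $h_i$.

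I do not expect a real obstacle here: both constructions are the evident ones, and all verifications are routine componentwise checks. The only point requiring a touch of care is confirming that the chosen terminal object and pullback actually carry braided-crossed-module structures compatible with the inherited algebra structure, but this is immediate from Example (1) and from the fact that all defining axioms of a braided crossed $R$-module are preserved by products and by passage to the fibre product over $R$.
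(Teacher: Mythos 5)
Your proof is correct and follows essentially the same route as the paper: the paper likewise identifies $\{R,i_R\}$ as the terminal object and constructs the binary product of $\{C,\partial\}$ and $\{D,\delta\}$ as the pullback $C\sqcap D=\{(c,d)\mid\partial(c)=\delta(d)\}$ over it, extending to all finite products by induction. Your version merely spells out the componentwise verifications and the universal property in more detail than the paper does.
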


\begin{proof}
Let $\left\{ C,\partial \right\} $ and $\left\{ D,\delta \right\}$ be braided crossed $R$-modules. The product $ \left\{ C\sqcap D,\tau \right\} $ is the pullback over the
terminal object $\left\{ R,i_{R}\right\} ,$
\begin{equation*}
 \xymatrix@R=40pt@C=40pt{
  \left\{ C\sqcap D,\tau \right\} \ar[d]_{\delta ^{\prime }} \ar[r]^{\partial ^{\prime }}
                &\left\{ C,\partial \right\}  \ar[d]^{\partial}  \\
  \left\{ D,\delta \right\}  \ar[r]_{\delta}
                & \left\{ R,i_{R}\right\}             }
\end{equation*}
\noindent where $C\sqcap D=\left\{ \left( c,d\right) \mid \partial \left( c\right) =\delta
\left( d\right) \right\} $ and $\tau :C\sqcap D\longrightarrow R$ is defined
by $\tau=\partial \partial^{\prime }=\delta \delta ^{\prime }.$ Then by induction, $\mathsf{BXMod/R}$
has finite products.
\end{proof}

\begin{proposition}
\label{crll} $\mathsf{BXMod/R}$ is finitely complete.
\end{proposition}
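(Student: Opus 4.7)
The plan is to invoke the standard categorical fact that a category is finitely complete if and only if it admits equalisers of all parallel pairs together with all finite products (equivalently, equalisers plus a terminal object plus binary products). Since both ingredients have just been established in Propositions \ref{prop3} and \ref{prop4}, the proof reduces to citing this well-known result and checking that the empty product, i.e.\ the terminal object, really is covered by Proposition \ref{prop4}.

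First I would state that, by Proposition \ref{prop4}, $\mathsf{BXMod/R}$ has all finite products; in particular it has a terminal object, which is precisely the braided crossed $R$-module $\{R,i_R\}$ with identity boundary and braiding $\{r,r'\}=rr'$ (this is the object already used as the base of the pullback defining binary products in Proposition \ref{prop4}, and it is easily verified from axioms BCM1--BCM4 that it is indeed terminal, since for any $\{C,\partial\}$ the boundary $\partial$ itself is the unique morphism to $\{R,i_R\}$). Next I would note that Proposition \ref{prop3} supplies equalisers of all parallel pairs.

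Having these two ingredients, the standard construction builds an arbitrary finite limit: given a finite diagram $D:\mathcal{J}\longrightarrow \mathsf{BXMod/R}$ with objects $\{D_j,\delta_j\}$ and morphisms $D(\phi):\{D_j,\delta_j\}\to \{D_{j'},\delta_{j'}\}$ for $\phi:j\to j'$, form the product $P=\prod_{j\in\mathrm{Ob}(\mathcal{J})}\{D_j,\delta_j\}$ with projections $\pi_j$, and then take the equaliser of the two morphisms
\begin{equation*}
P \;\rightrightarrows\; \prod_{\phi\in\mathrm{Mor}(\mathcal{J})}\{D_{\mathrm{cod}(\phi)},\delta_{\mathrm{cod}(\phi)}\}
\end{equation*}
whose $\phi$-components are respectively $D(\phi)\circ\pi_{\mathrm{dom}(\phi)}$ and $\pi_{\mathrm{cod}(\phi)}$; this equaliser is the limit of $D$. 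In particular pullbacks exist, as they arise from equalisers of binary products.

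The main (indeed only) obstacle is being careful that the general categorical recipe genuinely applies in our setting, which amounts to nothing more than the fact that Propositions \ref{prop3} and \ref{prop4} were proved for the \emph{same} category $\mathsf{BXMod/R}$ and that their constructions yield honest objects and morphisms of braided crossed $R$-modules (already verified in those proofs). No further calculation with the braiding $\{-,-\}$ is needed.
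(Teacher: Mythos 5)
Your proposal is correct and follows essentially the same route as the paper, whose entire proof is the one-line observation that finite completeness follows from Propositions \ref{prop3} and \ref{prop4} (equalisers plus finite products). You merely spell out the standard product--equaliser construction of finite limits and the terminal object $\{R,i_R\}$ in more detail than the paper does, which is harmless and arguably a useful expansion.
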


\begin{proof}
Follows from Propositions \ref{prop3} and \ref{prop4}.
\end{proof}
\begin{proposition}
\label{prp2} In $\mathsf{BXMod/R}$ every morphism has a kernel pair, and the
kernel pair has a coequaliser.
\end{proposition}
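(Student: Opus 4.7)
The strategy is to extract the kernel pair from Proposition \ref{crll} and then construct the coequaliser as the image factorisation of $f$.

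First, for a morphism $f:\{C,\partial\}\to\{D,\delta\}$ in $\mathsf{BXMod/R}$, Proposition \ref{crll} gives the kernel pair as the pullback of $f$ against itself. Explicitly, one may describe it as $K=\{(c,c')\in C\times C\mid f(c)=f(c')\}$, with boundary $\kappa(c,c'):=\partial(c)=\partial(c')$, componentwise $R$-action, diagonal braiding $\{r,r'\}_K=(\{r,r'\}_C,\{r,r'\}_C)$, and the two projections $p_0,p_1:\{K,\kappa\}\to\{C,\partial\}$; axioms BCM1--BCM4 are inherited diagonally from those of $\{C,\partial\}$.

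Next, for the coequaliser, I would set $Q:=f(C)\subseteq D$ with boundary $\delta_Q:=\delta|_Q$, and let $\bar f:C\to Q$ be $f$ corestricted to its image. The first task is to verify that $\{Q,\delta_Q\}$ is a braided crossed $R$-submodule of $\{D,\delta\}$: the subset $Q$ is a $k$-subalgebra of $D$, closed under the $R$-action because $r\cdot f(c)=f(r\cdot c)\in Q$, and closed under the braiding of $D$ because $\{r,r'\}_D=f(\{r,r'\}_C)\in Q$ (here one uses essentially that $f$ is a $\mathsf{BXMod/R}$-morphism). The axioms BCM1--BCM4 for $\{Q,\delta_Q\}$ follow by restriction from those of $\{D,\delta\}$, and $\bar f$ is then a morphism of braided crossed $R$-modules by construction.

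For the universal property, I would take $h:\{C,\partial\}\to\{T,\theta\}$ with $hp_0=hp_1$; whenever $f(c_1)=f(c_2)$ we have $(c_1,c_2)\in K$ and hence $h(c_1)=h(c_2)$, so $\tilde h:Q\to T$ defined by $\tilde h(f(c))=h(c)$ is well defined, and uniqueness follows from the surjectivity of $\bar f$. The remaining point is to check that $\tilde h$ is itself a morphism in $\mathsf{BXMod/R}$, which reduces to pulling each required identity back along a preimage under $\bar f$ and applying the corresponding property of $h$.

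The main obstacle I expect is not conceptual but rather the bookkeeping in the middle step: confirming that $Q=f(C)$ truly inherits every ingredient of the braided crossed $R$-module structure from $\{D,\delta\}$ (in particular closure under the braiding, which crucially uses the morphism condition $f\{-,-\}=\{-,-\}$ of $\mathsf{BXMod/R}$), and that the induced $\tilde h$ preserves this braiding. Each of these verifications is a short diagram chase, but collectively they invoke all four axioms BCM1--BCM4 and the full definition of a morphism in $\mathsf{BXMod/R}$.
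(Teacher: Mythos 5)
Your argument is correct, and the kernel-pair half is essentially identical to the paper's: both exhibit the kernel pair explicitly as $\{(c,c')\mid f(c)=f(c')\}$ with componentwise $R$-action, the common boundary, and the diagonal braiding. For the coequaliser you take a genuinely different (though equivalent) route. The paper stays inside the domain: it forms the ideal $I$ generated by the differences $p_1(x)-p_2(x)$, observes $I\subseteq\ker\partial$ so that the boundary and the braiding descend to the quotient, and takes the projection onto $A/I$ as the coequaliser. You instead pass to the codomain and take the image $Q=f(C)$ with the corestriction $\bar f$. Since the set of differences $\{c-c'\mid f(c)=f(c')\}$ is precisely $\ker f$, the ideal $I$ equals $\ker f$ and the two constructions yield isomorphic objects, with parallel universal-property verifications. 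The trade-off is where the bookkeeping lands: the quotient presentation must check that the induced boundary and braiding are well defined on cosets, while your image presentation must check that $f(C)$ is closed under the $R$-action and under the braiding of the codomain --- the latter being exactly where the morphism condition $f\{-,-\}=\{-,-\}$ enters, as you correctly flag. Both are complete proofs; your version makes the regular-image factorisation more visible, while the paper's quotient construction is closer in form to the one it reuses for effectiveness of equivalence relations in Proposition \ref{prp3}.
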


\begin{proof}
Let $\left\{ A,\partial \right\} $ and $\left\{ B,\beta \right\} $ be two
braided crossed $R$-modules. Let $f:\left\{ A,\partial \right\}
\longrightarrow \left\{ B,\beta \right\} $ be a morphism of braided crossed $%
R$-modules. Then $\left( A,f\right) $ is a crossed $B$-module, where $B$
acts on $A$ via $\beta $ and the homomorphism $\alpha :A\times _{B}A\longrightarrow B$ defined by $\alpha \left( a,a^{\prime }\right) =f\left( a\right) =f\left(
a^{\prime }\right) $ is a crossed $B$-module, where $A\times _{B}A=\left\{
\left( a,a^{\prime }\right) \mid f\left( a\right) =f\left( a^{\prime
}\right) \right\} $ is a $B$-algebra:

$%
\begin{array}{ll}
& \alpha \left( b\cdot \left( a,a^{\prime }\right) \right) =f\left( b\cdot
a\right) =b\cdot f\left( a\right) =b\cdot \alpha \left( a,a^{\prime }\right),
\end{array}%
$ \smallskip

$%
\begin{array}{ll}
& \alpha \left( a,a^{\prime }\right) \cdot \left( a_{1},a_{1}^{\prime
}\right) =\left( f\left( a\right) \cdot a_{1},f\left( a^{\prime }\right)
\cdot a_{1}^{\prime }\right) =\left( a,a^{\prime }\right) \left(
a_{1},a_{1}^{\prime }\right),
\end{array}%
$

{\noindent }for all $\left( a,a^{\prime }\right) ,\left( a_{1},a_{1}^{\prime
}\right) \in A\times _{B}A, b \in B.$

We can define $\partial ^{\prime }:A\times _{B}A\longrightarrow R$ by $%
\partial ^{\prime }\left( a,a^{\prime }\right) =\partial \left( a\right)
=\partial \left( a^{\prime }\right) $, since $\beta \alpha =\beta f=\partial
$. It is easily checked that $\left( A\times _{B}A,R,\partial ^{\prime }\right)
$ is a crossed module:

$%
\begin{array}{ll}
& \partial ^{\prime }\left( r\cdot \left( a,a^{\prime }\right) \right)
=\partial \left( r\cdot a\right) =r\cdot \partial \left( a\right) =r\cdot
\partial ^{\prime }\left( a,a^{\prime }\right),
\end{array}%
$

$%
\begin{array}{llllll}
& \partial ^{\prime }\left( a,a^{\prime }\right) \cdot \left(
a_{1},a_{1}^{\prime }\right)  & = & \partial \left( a\right) \cdot \left(
a_{1},a_{1}^{\prime }\right)  & = & \left( \partial \left( a\right) \cdot
a_{1},\partial \left( a\right) \cdot a_{1}^{\prime }\right)  \\
&  &  &  & = & \left( \partial \left( a\right) \cdot a_{1},\partial \left(
a^{\prime }\right) \cdot a_{1}^{\prime }\right)  \\
&  &  &  & = & \left( aa_{1},a^{\prime }a_{1}^{\prime }\right)  \\
&  &  &  & = & \left( a,a^{\prime }\right) \left( a_{1},a_{1}^{\prime
}\right),
\end{array}%
$
{\noindent }for all $\left( a,a^{\prime }\right) ,\left( a_{1},a_{1}^{\prime
}\right) \in A\times _{B}A$, $r \in R$.

Below we will show that $\left\{ A\times _{B}A,\partial ^{\prime }\right\} $ is a braided
crossed $R$-module with the braiding map
\begin{equation*}
\left\{ -,-\right\} :R\times R\longrightarrow A\times _{B}A
\end{equation*}%
defined by $\left\{ r,r^{\prime }\right\} =\left( \left\{ r,r^{\prime }\right\}
,\left\{ r,r^{\prime }\right\} \right) $,

$%
\begin{array}{ll}
\mathbf{BCM1)} & \partial ^{\prime }\left\{ r,r^{\prime }\right\}=\partial \left\{ r,r^{\prime }\right\}=rr^{\prime },%
\end{array}%
$

$%
\begin{array}{llll}
\mathbf{BCM2)} & \left\{ \partial ^{\prime }\left( a,a^{\prime }\right)
,\partial ^{\prime }\left( b,b^{\prime }\right) \right\}  & = & \left(
\left\{ \partial a,\partial b\right\} ,\left\{ \partial a^{\prime },\partial
b^{\prime }\right\} \right)  \\
&  & = & \left( ab,a^{\prime }b^{\prime }\right)  \\
&  & = & \left( a,a^{\prime }\right) \left( b,b^{\prime }\right),
\end{array}%
$

$%
\begin{array}{lrll}
\mathbf{BCM3)} & \left\{ \partial ^{\prime }\left( a,a^{\prime }\right)
,r\right\}  & = & \left( \left\{ \partial a,r\right\} ,\left\{ \partial
a^{\prime },r\right\} \right)  \\
&  & = & \left( r\cdot a,r\cdot a^{\prime }\right)  \\
&  & = & r\cdot \left( a,a^{\prime }\right)  \\
& \left\{ r,\partial ^{\prime }\left( a,a^{\prime }\right) \right\}  & = &
\left( \left\{ r,\partial a\right\} ,\left\{ r,\partial a^{\prime }\right\}
\right)  \\
&  & = & \left( r\cdot a,r\cdot a^{\prime }\right)  \\
&  & = & r\cdot \left( a,a^{\prime }\right),
\end{array}%
$

$%
\begin{array}{llll}
\mathbf{BCM4)} & \left\{ rr^{\prime },r^{\prime \prime }\right\} -\left\{
r,r^{\prime }r^{\prime \prime }\right\}  & = & \left( \left\{ rr^{\prime
},r^{\prime \prime }\right\} ,\left\{ rr^{\prime },r^{\prime \prime
}\right\} \right) -\left( \left\{ r,r^{\prime }r^{\prime \prime }\right\}
,\left\{ r,r^{\prime }r^{\prime \prime }\right\} \right)  \\
&  & = & \left( \left\{ rr^{\prime },r^{\prime \prime }\right\} -\left\{
r,r^{\prime }r^{\prime \prime }\right\} ,\left\{ rr^{\prime },r^{\prime
\prime }\right\} -\left\{ r,r^{\prime }r^{\prime \prime }\right\} \right)
\\
&  & = & \left( 0,0\right),
\end{array}%
$\bigskip

{\noindent }for all $\left( a,a^{\prime }\right) ,\left( b,b^{\prime }\right) \in
A\times _{B}A$, $r,r^{\prime },r^{\prime \prime }\in R$.
\\The following diagram
$$ \xymatrix@R=40pt@C=40pt{
 A\times _{B}A\ar@{->}@<2pt>[r]^-{p_1}
\ar@{->}@<-2pt> [r]_-{p_2} \ar[d]_-{\partial ^{\prime }}& \ A \ar[r]^-{f} \ar[d]^-{\partial}
& B\ar[d]^-{\beta} \\R\ar@{=}[r]^{} & R\ar@{=}[r]^{}  & R }
$$
{\noindent }commutes and the morphisms $p_{1}$ and $p_{2}$ above are
morphisms of braided crossed $R$-modules.
This construction satisfies
universal property: Let $%
\{E,\delta\} $ be a braided crossed $R$-module and $p_{1}^{\prime
},p_{2}^{\prime }:\{E,\delta\}\longrightarrow \{A,\partial
\} $ be any morphisms of braided crossed $R$-modules with $%
fp_{1}^{\prime }=fp_{2}^{\prime },$ then there exist a unique morphism
\begin{equation*}
h:\{E,\delta\} \longrightarrow \{
A\times _{B}A,\partial ^{\prime }\}
\end{equation*}%
given by $h\left( e\right) =\left( p_{1}^{\prime }\left( e\right)
,p_{2}^{\prime }\left( e\right) \right) ,$ for all $e\in E,$ which makes the diagram
$$ \xymatrix@R=35pt@C=35pt{
\{E,\delta\} \ar@/_/[ddr]_{p_{2}^{\prime }}
  \ar@/^/[drr]^{p_{1}^{\prime }}
    \ar@{.>}[dr]|-{h}                   \\
   &\{A\times _{B}A,\partial ^{\prime }\} \ar[d]^-{p_{2}}
   \ar[r]_-{p_{1}}
                      & \{A,\partial\} \ar[d]^-{f}    \\
   & \{A,\partial\}\ar[r]_-{f}     &\{B,\beta\}            } $$
{\noindent }commutative.
\\Then $\left( p_{1},p_{2}\right) $ is the kernel pair of
the morphism $f$.

Now we will show that the pair $\left( p_{1},p_{2}\right) $ has a
coequaliser. Let $I$ be an ideal of $A$ generated by
all the elements of the form $p_{1}\left( x\right) -p_{2}\left( x\right) ,$
for all $x=\left( a,a^{\prime }\right) \in A\times _{B}A$. We will define the braided crossed $R$-module $\delta
:A/I\longrightarrow R$ by $\delta \left( a+I\right) =\partial \left(
a\right) ,$ $\left\{ r,r^{\prime }\right\} =\left\{ r,r^{\prime }\right\} +I$
for $a\in A,$ $r,r^{\prime }\in R$. Since $I\subseteq
Ker\partial $, $\delta $ is well defined. By the definition of $\left\{
A/I,\delta \right\} $, the morphism $q:\left\{ A,\partial \right\}
\longrightarrow \left\{ A/I,\delta \right\} $ is the induced projection and
it is a morphism of braided crossed $R$-modules, i.e., the diagram
$$ \xymatrix@R=40pt@C=40pt{
A\times _{B}A \ar@{->}@<2pt>[r]^-{p_1}
\ar@{->}@<-2pt> [r]_-{p_2} \ar[dr]_-{\partial ^{\prime }}& \ A \ar[r]^-{q} \ar[d]^-{\partial}&  A/I\ar[dl]^-{\delta} \\ & R & }
$$
{\noindent }commutes.
Suppose there exist a braided crossed $R$-module $\left\{ A^{\prime
},\alpha ^{\prime }\right\} $ and a morphism of braided crossed $R$-modules $%
q^{\prime } :\left\{ A,\partial \right\}
\longrightarrow \left\{ A^{\prime },\alpha ^{\prime }\right\} $ such that $%
q^{\prime }p_{1}=q^{\prime }p_{2},$ then there exists a unique morphism $
\varphi :\left\{ A/I,\alpha \right\} \longrightarrow \left\{
A^{\prime },\alpha ^{\prime }\right\} $ defined by $\varphi \left( a+I\right)
=q^{\prime }\left( a\right) $, satisfying $\varphi q=q^{\prime },$ i.e. $q$
is the universal among all the morphisms $q^{\prime } :\left\{ A,\partial \right\} \longrightarrow \left\{ A^{\prime },\alpha
^{\prime }\right\} $ for any braided crossed $R$-module $\left\{ A^{\prime
},\alpha ^{\prime }\right\} .$ So we get the following commutative diagram:
$$ \xymatrix@R=40pt@C=40pt{
A\times _{B}A \ar@{->}@<2pt>[r]^-{p_1}
\ar@{->}@<-2pt> [r]_-{p_2} \ar[dr]_-{\partial ^{\prime }}& \ A \ar[r]^-{q} \ar[d]^-{\partial} \ar[dr]|(0.7){q^{\prime }}&  A/ I\ar[dl]|(0.7){ \ \ \ {\delta} \ \ \ } \ar@{.>}[d]^{\varphi}\\ & R & A^{\prime }\ar[l]^{\delta ^{\prime }} }
$$
Then $q$ is the coequaliser of the pair $\left( p_{1},p_{2}\right) .$

Therefore in $\mathsf{BXMod/R}$, the kernel pair of every morphism exists
and has a coequaliser.
\end{proof}

\begin{proposition}
\label{prp1}In $\mathsf{BXMod/R}$ every regular epimorphisms are stable under pullback.
\end{proposition}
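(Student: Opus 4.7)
The plan is to characterise regular epimorphisms in $\mathsf{BXMod/R}$ as exactly the morphisms that are surjective on underlying $R$-algebras, and then transfer surjectivity along pullbacks. First I would establish the characterisation. The coequaliser construction in Proposition~\ref{prp2} exhibits the coequaliser of the kernel pair as the quotient projection onto $A/I$, where $I$ is the ideal generated by the differences $p_1(x)-p_2(x)$; the same construction works verbatim for an arbitrary parallel pair, so every coequaliser---and hence every regular epimorphism---is surjective on underlying algebras. Conversely, given a surjective morphism $f:\{X,\partial_X\}\to\{Y,\partial_Y\}$ of braided crossed $R$-modules, the ideal of $X$ generated by $\{\,p_1(x)-p_2(x) : x\in X\times_Y X\,\}$ coincides with $\ker f$; Proposition~\ref{prp2} then yields that the coequaliser of the kernel pair of $f$ is $X/\ker f\cong Y$, so $f$ itself is the coequaliser of its kernel pair and thus a regular epimorphism.

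Next I would construct the pullback of $f$ along an arbitrary morphism $h:\{Z,\partial_Z\}\to\{Y,\partial_Y\}$ explicitly. Set
\[
W=\{(x,z)\in X\times Z : f(x)=h(z)\}
\]
with componentwise algebra and $R$-action, structure morphism $\partial_W(x,z)=\partial_X(x)=\partial_Z(z)$, and braiding $\{r,r'\}_W=(\{r,r'\}_X,\{r,r'\}_Z)$. The braiding element does lie in $W$ because $f$ and $h$ both preserve the braiding over the common base $R$, and the equality $\partial_X(x)=\partial_Z(z)$ on $W$ holds since morphisms in $\mathsf{BXMod/R}$ fix the base. A verification essentially identical to the one carried out for $A\times_B A$ in Proposition~\ref{prp2} shows that $\{W,\partial_W\}$ is a braided crossed $R$-module and that the projections $\pi_X:W\to X$ and $\pi_Z:W\to Z$ are morphisms in $\mathsf{BXMod/R}$ enjoying the universal property of the pullback of $f$ along $h$.

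Finally, if $f$ is a regular epimorphism then by the first step the $R$-algebra map $f:X\to Y$ is surjective. For any $z\in Z$, the element $h(z)\in Y$ admits a preimage $x\in X$ under $f$, so $(x,z)\in W$ and $\pi_Z(x,z)=z$; hence $\pi_Z$ is surjective on underlying algebras, and applying the characterisation again yields that $\pi_Z$ is a regular epimorphism. The main obstacle is really the first step: once the equivalence ``regular epimorphism $\Longleftrightarrow$ surjective morphism'' is in hand, the stability assertion collapses to the familiar surjectivity-lifting argument used in classical algebraic categories such as commutative $R$-algebras.
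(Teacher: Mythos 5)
Your proof is correct and follows essentially the same route as the paper's: both arguments reduce regular epimorphisms in $\mathsf{BXMod/R}$ to surjections of the underlying algebras, form the same explicit pullback $W=\{(x,z):f(x)=h(z)\}$, and conclude by lifting surjectivity along the pullback. If anything, yours is the more complete version, since you actually prove the characterisation ``regular epimorphism $\Leftrightarrow$ surjective morphism'' using the coequaliser construction of Proposition~\ref{prp2}, whereas the paper simply asserts that a regular epimorphism of braided crossed $R$-modules is a regular (hence surjective) epimorphism of $k$-algebras and that the pullback projection is the coequaliser of its kernel pair.
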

\begin{proof}
Let $\Phi :\left\{ C,\mu \right\} \longrightarrow \left\{ D,\sigma \right\} $
be a regular epimorphism in $\mathsf{BXMod/R,}$ which means that $%
C\longrightarrow D$ is a regular epimorphism of $k$-algebras. Let $\eta
:\left\{ G,\theta \right\} \longrightarrow \left\{ D,\sigma \right\} $ be
any morphism in $\mathsf{BXMod/R.}$ The pullback of $\Phi $ along $\eta $ is
$\Phi ^{\ast }:\left\{ C\times _{D}G,\rho \right\} \longrightarrow \left\{
G,\theta \right\} $ given by $\Phi ^{\ast }\left( c,g\right) =g$ where $%
C\times _{D}G=\left\{ \left( c,g\right) \mid \Phi \left( c\right) =\eta
\left( g\right) \right\} $ and $\rho :C\times _{D}G\longrightarrow R$ is a
braided crossed $R$-module defined by $\rho \left( c,g\right) =\mu \left(
c\right) =\theta \left( g\right) $ with $\left\{ r,r^{\prime }\right\}
=\left( \left\{ r,r^{\prime }\right\} ,\left\{ r,r^{\prime }\right\} \right)
$, for all $r,r^{\prime }\in R,$ $c\in C,$ $g\in G.$ Since in the category
of $k$-algebras the regular epimorphisms are characterised as the surjective
homomorphisms, these are closed in this way under pullback. Thus $\Phi
^{\ast }$ is a surjective homomorphism.
\begin{equation*}
\xymatrix@R=40pt@C=40pt{
\left\{ C\times
_{D}G,\rho \right\} \ar[d]_{\Phi ^{\ast }} \ar[r]^{}
                &\left\{ C,\mu \right\} \ar[d]^{\Phi}  \\
  \left\{ G,\theta \right\}  \ar[r]_{\eta}
                & \left\{ D,\sigma \right\} }
\end{equation*}
\noindent We claim that the surjective morphism $\Phi
^{\ast }$ is a regular epimorphism, that is $\Phi ^{\ast }$ is the coequaliser of a pair of morphisms. Define
\begin{equation*}
E=\left\{ \left( x,y\right) \in \left( C\times _{D}G\right) \times \left(
C\times _{D}G\right) \mid \Phi^{\ast } \left( x\right) =\Phi^{\ast } \left( y\right)
\right\}
\end{equation*}%
and let $\left\{ E,\alpha \right\} \underset{q}{\overset{p}{\rightrightarrows }}%
\left\{ C\times _{D}G,\rho \right\} $ be the first and second projections. Since $G$ is isomorphic to the quotient of $C\times
_{D}G$, $\Phi ^{\ast }$ is the coequaliser of $p
$ and $q$. Thus we get that if $\Phi $ is regular epimorphism, so is $\Phi ^{\ast }$, as required.
\end{proof}

\begin{theorem}
\label{theo} $\mathsf{BXMod/R}$ is regular.
\end{theorem}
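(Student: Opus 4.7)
The plan is to unpack the definition of regular category recalled in the Introduction, which has three clauses, and verify each one by appealing to the propositions already proved in this section.

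First, I would note that clause (i), finite completeness of the ambient category, is exactly Proposition \ref{crll}, which itself was established by combining the existence of equalisers (Proposition \ref{prop3}) with the existence of finite products (Proposition \ref{prop4}). Second, clause (ii) demands that every morphism $f:\{A,\partial\}\longrightarrow \{B,\beta\}$ in $\mathsf{BXMod/R}$ possess a kernel pair and that this kernel pair admit a coequaliser; this is precisely the content of Proposition \ref{prp2}, via the construction of $\{A\times_B A,\partial'\}$ with diagonal braiding $\{r,r'\}=(\{r,r'\},\{r,r'\})$ and the quotient $\{A/I,\delta\}$ by the ideal $I\subseteq A$ generated by the differences $p_1(x)-p_2(x)$. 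Third, clause (iii), stability of regular epimorphisms under pullback, is the statement of Proposition \ref{prp1}, which in turn relied on the characterisation of regular epimorphisms of commutative $k$-algebras as surjective homomorphisms.

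Combining these three ingredients immediately yields that $\mathsf{BXMod/R}$ satisfies all three defining conditions of a regular category. Accordingly, the proof itself is essentially a one-line assembly: \emph{the result follows directly from Propositions \ref{crll}, \ref{prp2} and \ref{prp1}}.

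Since all the substantive work has already been distributed across the preceding propositions, there is no genuine obstacle at this stage. The real difficulty—verifying the braided crossed module axioms \textbf{BCM1}--\textbf{BCM4} on the kernel pair $A\times_B A$ and on the quotient $A/I$, and checking that the surjection coming from a pullback is again a coequaliser of its own kernel pair—was already addressed in Propositions \ref{prp2} and \ref{prp1}. Once those are in hand, the Theorem is only a matter of matching three named hypotheses to three named conclusions.
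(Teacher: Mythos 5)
Your proposal is correct and matches the paper's proof exactly: the paper likewise disposes of the theorem in one line by citing Propositions \ref{crll}, \ref{prp2} and \ref{prp1} for the three clauses of the definition of a regular category. Your extra commentary identifying where the real work was done is accurate but not part of the paper's argument.
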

\begin{proof}
The proof is a direct consequence of Propositions \ref{crll}, \ref{prp2} and \ref{prp1}.
\end{proof}

Now we recall the definition of equivalence relation from \cite{Wells}.
\begin{definition}
Let $A$ be a category with finite limits. If $A$ is an object, a subobject $%
\left( d^{0},d^{1}\right) :E\longrightarrow A\times A$ is called an
equivalence relation if it is
\noindent $\mathbf{ER1.}$ Reflexive: There is an arrow $r:A\longrightarrow E$ such that $%
d^{0}r=d^{1}r=id_{A};$

\noindent $\mathbf{ER2.}$ Symmetric: There is an arrow $s:E\longrightarrow E$ such that $%
d^{0}s=d^{1}$ and $d^{1}s=d^{0};$

\noindent $\mathbf{ER3.}$ Transitive: If
\begin{equation*}
\xymatrix@R=30pt@C=30pt{
 T \ar[d]_{q_{2}} \ar[r]^{q_{1}}
                &E \ar[d]^{d^{0}}  \\
  E \ar[r]_{d^{1}}
                & A }
\end{equation*}
\noindent is a pullback, there is an arrow $t:T\longrightarrow E$ such that $%
d^{1}t=d^{1}q_{1}$ and $d^{0}t=d^{0}q_{2.}$
\end{definition}

\begin{proposition}
\label{prp3} Every equivalence relation
$$ \xymatrix@R=40pt@C=40pt{
\{ E,\partial\}\ar@{->}@<2pt>[r]^-{u}
\ar@{->}@<-2pt> [r]_-{v}& \ \{A,\alpha\} }
$$

{\noindent }in the category of braided crossed $R$-modules is effective.%

\end{proposition}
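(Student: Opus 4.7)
The plan is to realize $(u,v)$ as the kernel pair of its own coequaliser. First I would construct the coequaliser $q:\{A,\alpha\}\to\{A/I,\bar\alpha\}$ of $u$ and $v$ by mimicking the construction used in the second half of Proposition \ref{prp2}: take $I$ to be the ideal of $A$ generated by $\{u(x)-v(x):x\in E\}$, set $\bar\alpha(a+I)=\alpha(a)$, and transport the braiding by $\{r,r'\}_{A/I}=\{r,r'\}_A+I$. The verifications performed there (well-definedness of $\bar\alpha$, descent of BCM1--BCM4, and the universal property of the projection $q$) only use that the maps being coequalised are morphisms of braided crossed $R$-modules, and so they carry over verbatim to the present setting.

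Next I would apply Proposition \ref{prp2} to $q$ to obtain its kernel pair $(p_1,p_2):\{A\times_{A/I}A,\partial'\}\rightrightarrows\{A,\alpha\}$, and introduce the canonical comparison map
\begin{equation*}
\phi:\{E,\partial\}\longrightarrow\{A\times_{A/I}A,\partial'\},\qquad\phi(e)=(u(e),v(e)).
\end{equation*}
It is well defined because $qu=qv$, and it is a morphism of braided crossed $R$-modules because $u$ and $v$ are. The statement of the proposition then reduces to showing that $\phi$ is an isomorphism, for in that case $(u,v)=(p_1\phi,p_2\phi)$ is the kernel pair of $q$. Injectivity of $\phi$ is immediate, since $(u,v):E\to A\times A$ is a subobject of $A\times A$ and therefore a monomorphism.

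The main obstacle is surjectivity of $\phi$, and it is here that the hypotheses ER1--ER3 must be used in an essential way. My approach is to consider the relation $\sim$ on $A$ given by $a\sim a'$ if and only if there exists $x\in E$ with $u(x)=a$ and $v(x)=a'$; the axioms ER1, ER2, ER3 translate respectively into reflexivity, symmetry, and transitivity of $\sim$, while the fact that $u,v$ are $R$-algebra maps makes $\sim$ a congruence. The key intermediate assertion is that $a\sim a'$ if and only if $a-a'\in I$; the forward direction is obvious, whereas the reverse requires, given $x\in E$ with $u(x)-v(x)=a-a'$, the construction of some $y\in E$ with $u(y)=a$ and $v(y)=a'$. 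Here I would invoke the reflexivity morphism $\rho:\{A,\alpha\}\to\{E,\partial\}$ furnished by ER1 and take $y=x+\rho(a')-\rho(v(x))$; a short direct computation then yields $u(y)=a$ and $v(y)=a'$, so $\phi(y)=(a,a')$, surjectivity is established, and $(u,v)$ is the kernel pair of $q$ as required.
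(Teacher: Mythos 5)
Your strategy is sound and genuinely different from the paper's. The paper never forms the coequaliser as an algebra quotient $A/I$: it takes $A/E$ to be the set-theoretic quotient by the equivalence relation, endows it with the evident braided crossed $R$-module structure, and then observes that $(u,v)$ is the kernel pair of the projection $q$ essentially by definition, since $qu'=qv'$ means exactly $(u'(d),v'(d))\in E$. The cost of that route is that the assertion ``$A/E$ has the structure of an $R$-algebra'' silently contains the claim that $E$ is a congruence; the benefit is that the kernel-pair universal property becomes tautological. Your route makes the algebraic content explicit (quotient by an ideal, comparison map $\phi$ into the kernel pair of $q$), and it isolates the genuinely non-trivial point, namely surjectivity of $\phi$; it also reveals that only \textbf{ER1} is needed, which is the expected Mal'cev phenomenon for varieties of algebras. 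One step you elide does need an argument: when you say ``given $x\in E$ with $u(x)-v(x)=a-a'$'' you are assuming that every element of the ideal $I$ is of the form $u(x)-v(x)$ for a \emph{single} $x\in E$, i.e.\ that the set $N=\left\{ u(x)-v(x)\mid x\in E\right\}$ is already an ideal rather than merely a generating set. This is true but must be checked: $N$ is closed under addition because $E$ is a $k$-submodule of the product, and for $c\in A$ one has $c\bigl(u(x)-v(x)\bigr)=u(y)-v(y)$ with $y=\rho(c)\,x\in E$, which again uses the reflexivity morphism $\rho$ and the fact that $E$ is a subalgebra. With that lemma inserted, your computation $y=x+\rho(a')-\rho(v(x))$ is correct ($u(y)=a$, $v(y)=a'$), injectivity of $\phi$ follows from $(u,v)$ being monic as you say, and a bijective morphism of braided crossed $R$-modules is an isomorphism, so the proof closes.
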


\begin{proof}
Let $A/E$ be the set of all equivalence classes $\left[ a\right] $ with
respect to $E,$ i.e., $\left[ a\right] =\left\{ b\in A\mid \left( a,b\right)
\in E\right\} .$ $A/E$ has the structure of an $R$-algebra. $\overline{%
\alpha }:A/E\longrightarrow R$ induced by $\alpha $ is well defined since $%
\alpha u=\alpha v.$ We can form the braided crossed $R$-module $\overline{%
\alpha }$ with braiding map $\left\{ r,r^{\prime }\right\} =\left[ \left\{
r,r^{\prime }\right\} \right] $ and get the following diagram,
$$ \xymatrix@R=40pt@C=40pt{
E \ar@{->}@<2pt>[r]^-{u}
\ar@{->}@<-2pt> [r]_-{v} \ar[dr]_-{\partial}& \ A \ar[r]^-{q} \ar[d]^-{\alpha}
& A/ E\ar[dl]^-{\overline{\alpha }} \\ & R & }
$$

{\noindent }of morphisms of braided crossed $R$-modules, where $q$ is the
projection onto $A/E$. By the definition of an equivalence relation on $A$, we have $\left( u\left(
x\right) ,v\left( x\right) \right) \in E,$ for $x\in E$. Since
\begin{equation*}
E\subseteq A\times _{R}A=\left\{ \left( a,a^{\prime }\right) \mid \alpha
\left( a\right) =\alpha \left( a^{\prime }\right) \right\} ,
\end{equation*}

{\noindent we get }$\alpha u\left( x\right) =\alpha v\left( x\right) $, thus
$\left( 0,u\left( x\right) -v\left( x\right) \right) \in E,$ therefore $qu=qv
$. Suppose that there exist a braided crossed $R$-module $\left\{ D,\omega
\right\} $ with $u^{\prime },v^{\prime }:\left\{ D,\omega \right\}
\longrightarrow \left\{ A,\alpha \right\} $ such that $qu^{\prime
}=qv^{\prime },$ so $\left[ u^{\prime }\left( d\right) \right] =\left[
v^{\prime }\left( d\right) \right] ,$ i.e. $\left( u^{\prime }\left(
d\right) ,v^{\prime }\left( d\right) \right) \in E,$ and therefore there
exists a unique morphism $\theta :\left\{ D,\omega \right\} \longrightarrow
\left\{ E,\partial \right\} ,$ such that the diagram
$$ \xymatrix@R=35pt@C=35pt{
 \{D,\omega\}\ar@/_/[ddr]_{v^{\prime }} \ar@/^/[drr]^{u^{\prime }}
    \ar@{.>}[dr]|-{ \ \ {\theta} \ \ }                   \\
   & \{E,\partial\}\ar[d]^-{v} \ar[r]_-{u}
                      & \{A,\alpha\} \ar[d]^-{q}    \\
   & \{A,\alpha\}\ar[r]_-{q}     & \{A/E,\overline{\alpha }\}
   } $$

{\noindent }commutes.

Thus $\left( u,v\right) $ is the kernel pair of a morphism $q:\left\{
A,\alpha \right\} \longrightarrow \left\{ A/E,\overline{\alpha }\right\} $
in the category of braided crossed $R$-modules.
\end{proof}

\begin{theorem}
The category $\mathsf{BXMod/R}$ of braided crossed $R$-modules is a Barr exact category.\label{th}
\end{theorem}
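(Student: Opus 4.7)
The plan is to read the final theorem as the bare conjunction of the two properties that make up the definition of a Barr exact category given at the end of the Introduction: namely, that the category be regular, and that in it every equivalence relation be effective. Both halves have been established in isolation by the preceding results, so the proof is essentially a matter of assembling citations.

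First I would invoke Theorem \ref{theo}, which states that $\mathsf{BXMod/R}$ is regular. This in turn packages the three items needed to verify properties (i), (ii), (iii) of the definition of a regular category: finite completeness (Proposition \ref{crll}, itself obtained from Propositions \ref{prop3} and \ref{prop4}), the existence of kernel pairs of arbitrary morphisms together with their coequalisers (Proposition \ref{prp2}), and the stability of regular epimorphisms under pullback (Proposition \ref{prp1}). So the regularity clause is in place.

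Next I would invoke Proposition \ref{prp3}, which shows that an arbitrary equivalence relation
\[
\{E,\partial\}\;\underset{v}{\overset{u}{\rightrightarrows}}\;\{A,\alpha\}
\]
in $\mathsf{BXMod/R}$ is the kernel pair of the quotient morphism $q:\{A,\alpha\}\longrightarrow\{A/E,\overline{\alpha}\}$ constructed there. This is precisely the statement that every equivalence relation in $\mathsf{BXMod/R}$ is effective.

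Combining the two gives a category which is simultaneously regular and has all equivalence relations effective, which is by definition Barr exact, and the theorem follows. There is no real obstacle to overcome at this stage, since the substantive work (verifying the braided crossed module axioms on quotients, kernel pairs, and pullbacks) has already been dispatched in Propositions \ref{prop3}--\ref{prp3}; the present theorem is simply their synthesis.
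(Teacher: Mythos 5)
Your proposal is correct and follows exactly the paper's own argument: the theorem is deduced by combining Theorem \ref{theo} (regularity) with Proposition \ref{prp3} (effectiveness of equivalence relations), which together give the definition of a Barr exact category. No further comment is needed.
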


\begin{proof}
The conditions of exact category for $\mathsf{BXMod/R}$ are satisfied by Theorem \ref{theo} and Proposition \ref{prp3}
, which completes the proof.
\end{proof}\newline

\textbf {Acknowledgments}

The first author was partially supported by T\"{U}B\.{I}TAK (The Scientific and
Technological Research Council Of Turkey).

\noindent  H. G\"{u}ls\"{u}n Akay and U. Ege Arslan \newline
Department of Mathematics and Computer Science \newline
Eski\c{s}ehir Osmangazi University \newline
26480 Eski\c{s}ehir/Turkey\newline
e-mails: \{hgulsun, uege\} @ogu.edu.tr

\end{document}